\theoremstyle{plain}
\newtheorem{theorem}{Theorem}[section]
\newtheorem{lemma}[theorem]{Lemma}
\theoremstyle{definition}
\newtheorem{definition}[theorem]{Definition}
\theoremstyle{remark}
\begin{document}


\title{A new representation for the solutions of fractional differential equations with variable coefficients}

\date{}

\author[1]{Arran Fernandez\thanks{Email: \texttt{arran.fernandez@emu.edu.tr}}}
\author[2,3]{Joel E. Restrepo\thanks{Email: \texttt{joel.restrepo@nu.edu.kz}}}
\author[2]{Durvudkhan Suragan\thanks{Email: \texttt{durvudkhan.suragan@nu.edu.kz}}}

\affil[1]{{\small Department of Mathematics, Faculty of Arts and Sciences, Eastern Mediterranean University, North Cyprus, via Mersin-10, Turkey}}
\affil[2]{{\small Department of Mathematics, Nazarbayev University, Astana, Kazakhstan}}
\affil[3]{{\small Department of Mathematics, University of Antioquia, Medellin, Colombia}}

\maketitle

\begin{abstract}
A recent development in the theory of fractional differential equations with variable coefficients has been a method for obtaining an exact solution in the form of an infinite series involving nested fractional integral operators. This solution representation is constructive but difficult to calculate in practice. Here we show a new representation of the solution function, as a convergent series of single fractional integrals, which will be easier to use for computational work and applications. In the particular case of constant coefficients, the solution is given in terms of the Mittag-Leffler function. We also show some applications in Cauchy problems for partial differential equations involving both time-fractional and space-fractional operators and with time-dependent coefficients. \\

\textit{Keywords:} fractional differential equations; fractional integrals; time-dependent coefficients; fractional Cauchy problems; hypergeometric series.
\end{abstract}

\section{Introduction}

Fractional differential equations, describing relations between a function and its various derivatives of non-integer orders, have become a thriving field of research in the last few decades \cite{kilbasbook2006,podlubny}. Several analytical methods for solving classical ordinary or partial differential equations can be extended to the setting of fractional differential equations, such as Mikusi\'nski's operational calculus \cite{hadid-luchko,luchko,luchko-gorenflo}, transform methods for partial differential equations \cite{baleanu-fernandez,fernandez-baleanu-fokas}, weak solutions and spectral theory \cite{djida-area-nieto,djida-fernandez-area,cauchy1}, regularity estimates \cite{cauchy3,fernandez,otarola-salgado}, etc. One of the main reasons for studying fractional differential equations is their wide-ranging applications in physics, biology, engineering, economics, etc. \cite{hilfer,ionescu-etal,sun-etal}.

Just as with classical differential equations, there are different types of fractional differential equations with different levels of difficulty. Linear differential equations are almost always easier than nonlinear ones, and those with constant coefficients are easier than those where the coefficients are permitted to depend on one or more of the independent variables. Recently, a rigorous analytical method has been devised for solving fractional differential equations with variable coefficients. It has been applied, firstly to ordinary differential equations with fractional derivatives of Riemann--Liouville \cite{kim-o} and Caputo \cite{analitical} type, later to ordinary differential equations with other types of fractional operators \cite{FRS,RRS}, and also to partial differential equations with time-fractional and space-fractional terms \cite{BRS,joelsuragan}.

In the aforementioned papers, fractional ordinary differential equations with continuous variable coefficients were solved by means of an explicit representation for the solution, involving composition of Riemann--Liouville fractional integral operators, one inside another with multiplier functions. The resulting formulae are mathematically elegant but would be very difficult to calculate, even numerically, for a given problem with specific functions. Thus, the results in the literature so far are largely a mathematical curiosity, hard to apply in practice, even though the problems being solved (fractional differential equations with variable coefficients) do have real applications. In this paper, we construct a new formula for the same solution function, which will be better suited to numerical calculations and real-world applications.

Specifically, we treat the special case (of the general Caputo fractional differential equation analysed in \cite{analitical,RRS}) of the following fractional differential equation with one variable coefficient:
\begin{equation}\label{principal}
{^{C}D_{0+}^{\beta}}y(t)+a(t)y(t)=b(t),\qquad t\in[0,T],
\end{equation}
where $T>0$, $a,b\in C[0,T]$, $\beta\in\mathbb{C}$ with $\mathrm{Re}(\beta)>0$ (or simply $\beta>0$ real), and ${^{C}D_{0+}^{\beta}}$ is the Caputo fractional derivative (defined in a form that allows it to be applied on a suitably large function space, namely the expression \eqref{alternative} below). In the previous work of \cite{analitical}, the unique solution of \eqref{principal} with appropriate initial conditions was discovered to be
\[y(t)=\sum_{k=0}^{\infty}(-1)^kI^{\beta}_{0+}\Big(a(t)I_{0+}^{\beta}\Big)^kb(t).\]
Here we seek a new representation of this solution function which does not involve arbitrarily long compositions of Riemann--Liouville fractional integral operators. In fact, we will show an alternative formula for the solution which is given by an infinite series of single Riemann--Liouville fractional integrals of the forcing term $b(t)$, with the coefficient of this fractional integral depending just on the ordinary derivatives of the time-dependent coefficient $a(t)$. The new formula will be much easier to calculate and handle in practice, and we will demonstrate its usage in this paper, not only for fractional ordinary differential equations, but also for doubly fractional partial differential equations.

The paper is organised as follows. Section \ref{preli} is devoted to collecting definitions and results on fractional calculus and fractional differential equations. In Section \ref{mainresults}, we give the main results of the paper on the new representation of the solution of the fractional differential equation \eqref{principal}. In the special case of having a constant coefficient in \eqref{principal}, we show that the representation of the solution is given by the Mittag-Leffler function, which is consistent with already known results. In Section \ref{further}, using the results obtained in the previous section, we give an analytical solution of a Cauchy problem for a fractional partial differential equation with time-dependent coefficient. We finish the paper with some conclusions in Section \ref{Sec:concl}.

\section{Preliminaries}\label{preli}
 
In this section, we shall recall the definitions and some basic properties of the Riemann--Liouville fractional integro-differential operators -- for more details about these, see e.g. \cite[Chapter 2]{samko} and \cite[Chapter 2]{kilbasbook2006}. We shall also introduce the fractional differential equation to be analysed in this paper along with some rigorous facts about it. Finally, we shall introduce notations and basic facts concerning gamma functions and binomial coefficients, which will be used later in the paper.

\subsection{Riemann--Liouville fractional integro-differential operators}   

\begin{definition}[\cite{samko}, formula (2.17)]
Let $\alpha\in\mathbb{C}$, $\mathrm{Re}(\alpha)>0$, and let $f$ be an integrable function on a compact real interval $[a,b]$. The (left-sided) Riemann--Liouville fractional integral of $f$ is defined by:
\begin{equation}\label{fraci}
I_{a+}^{\alpha}f(t)=\frac1{\Gamma(\alpha)}\int_a^t (t-s)^{\alpha-1}f(s)\,\mathrm{d}s,\qquad t\in(a,b).
\end{equation}
\end{definition}

The Riemann--Liouville fractional integrals obey the following semigroup property \cite{kilbasbook2006,samko}:
\[I_{a+}^{\alpha}I_{a+}^{\beta}f(t)=I_{a+}^{\alpha+\beta}f(t),\qquad\mathrm{Re}(\alpha),\mathrm{Re}(\beta)>0,\,f\in L^1[a,b].
\]

\begin{definition}[\cite{samko}, formula (2.32)]
Let $\alpha\in\mathbb{C}$ with $\mathrm{Re}(\alpha)\geq0$, let $n=\lfloor\mathrm{Re}(\alpha)\rfloor+1$ so that $n-1\leq\mathrm{Re}(\alpha)<n$, and let $f\in AC^n[a,b]$ where $-\infty<a<b<\infty$. The (left-sided) Riemann--Liouville fractional derivative of $f$ is defined by:
\begin{equation}\label{fracd}
D_{a+}^{\alpha}f(t)=\left(\frac{\mathrm{d}}{\mathrm{d}t}\right)^n \big(I_{a+}^{n-\alpha}f\big)(t),\qquad t\in(a,b).
\end{equation}
\end{definition}

In this paper, instead of the Riemann--Liouville fractional derivative, we shall use the following modified version, for $\mathrm{Re}(\alpha)\geq0$ and $n=\lfloor\mathrm{Re}(\alpha)\rfloor+1$ and $f\in AC^n[a,b]$:
\begin{equation}\label{alternative}
^{C}D_{a+}^{\alpha}f(t)=D_{0+}^{\alpha}\biggl(f(t)-\sum_{j=0}^{n-1}\frac{f^{(j)}(a)}{j!}(t-a)^j\biggr),\qquad t\in(a,b).
\end{equation}
Notice that, if $f\in C^n[a,b]$, then the expression $^{C}D_{0+}^{\alpha}f(t)$ of \eqref{alternative} is exactly the so-called Caputo fractional derivative, defined as follows:  
\begin{equation}\label{Capder}
^{C}D^{\alpha}_{a+} {f(t)}=I_{a+}^{n-\alpha}\left(\frac{\mathrm{d}}{\mathrm{d}t}\right)^n f(t),\qquad t\in(a,b).
\end{equation}
The existence of the Caputo fractional derivative \eqref{Capder} is guaranteed by $f^{(n)}\in L^1[a,b]$, and the stronger condition $f\in C^n[a,b]$ gives the continuity of the derivative. Since the definitions \eqref{alternative} and \eqref{Capder} are identical for any function $f$ such that \eqref{Capder} is defined, it is reasonable to use \eqref{alternative} as the definition of the Caputo fractional derivative on the larger class of functions $AC^n[a,b]$, an extension of meaning of the Caputo fractional derivative beyond the space $C^n[a,b]$. Therefore, in the remainder of this paper we shall refer to the operator defined by \eqref{alternative} as the Caputo fractional derivative, without further elaboration.

Finally we give a fractional version of the Leibniz rule, various versions of which have been discussed in the literature. Results of this type have already proven useful in evaluating expressions involving nested fractional integrals with multiplier functions in between, e.g. in the 1972 paper of Love \cite{love}, although his work only covered the case where the multiplier is a power function. The following Lemma will be vital for our work below.

\begin{lemma}[\cite{miller-ross,osler,podlubny}] \label{Lem:FLR}
If $f$ is a continuous function and $g$ is an analytic function, then the following fractional Leibniz rule holds for any fractional integral of $f(t)g(t)$:
\[
I_{a+}^{\alpha}\big(f(t)g(t)\big)=\sum_{n=0}^{\infty}\binom{-\alpha}{n}I_{a+}^{\alpha+n}f(t)\cdot\left(\frac{\mathrm{d}}{\mathrm{d}t}\right)^ng(t),\qquad\mathrm{Re}(\alpha)>0.
\]
\end{lemma}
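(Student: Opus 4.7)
The plan is to derive the identity by expanding the analytic factor $g$ as a Taylor series about the upper endpoint $t$ and inserting it into the integral definition of $I_{a+}^{\alpha}$. Since $g$ is analytic in a neighbourhood of $[a,b]$, for $t\in(a,b)$ one has
\[
g(s)=\sum_{n=0}^{\infty}\frac{g^{(n)}(t)}{n!}(s-t)^n
\]
with the series converging absolutely and uniformly for $s$ in some closed neighbourhood of $t$; assuming (as is standard for this form of the Leibniz rule) that this neighbourhood covers the interval $[a,t]$, we obtain uniform convergence on $[a,t]$.

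First I would write
\[
I_{a+}^{\alpha}\bigl(f(t)g(t)\bigr)=\frac{1}{\Gamma(\alpha)}\int_a^t(t-s)^{\alpha-1}f(s)g(s)\,\mathrm{d}s,
\]
substitute the Taylor expansion of $g(s)$ centred at $t$, and then swap the sum with the integral. Using $(s-t)^n=(-1)^n(t-s)^n$ turns each term into
\[
\frac{(-1)^n g^{(n)}(t)}{n!}\cdot\frac{1}{\Gamma(\alpha)}\int_a^t(t-s)^{\alpha+n-1}f(s)\,\mathrm{d}s
=\frac{(-1)^n g^{(n)}(t)}{n!}\cdot\frac{\Gamma(\alpha+n)}{\Gamma(\alpha)}\,I_{a+}^{\alpha+n}f(t),
\]
after multiplying and dividing by $\Gamma(\alpha+n)$ to recognise a fractional integral of order $\alpha+n$. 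The final step is a gamma-function identity: the Pochhammer form of the binomial coefficient gives
\[
\binom{-\alpha}{n}=\frac{(-\alpha)(-\alpha-1)\cdots(-\alpha-n+1)}{n!}=\frac{(-1)^n\,\Gamma(\alpha+n)}{n!\,\Gamma(\alpha)},
\]
which matches the prefactor above and yields the claimed series.

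The main obstacle is the justification of interchanging summation and integration. Continuity of $f$ makes $f$ bounded on $[a,t]$, and the weight $(t-s)^{\alpha-1}$ is integrable there (since $\operatorname{Re}(\alpha)>0$), so it suffices to control $|g(s)-\sum_{n=0}^{N}\frac{g^{(n)}(t)}{n!}(s-t)^n|$ uniformly in $s\in[a,t]$. This is precisely what analyticity of $g$ on a disk containing $[a,t]$ supplies, via the standard tail estimate on a Taylor series inside its disk of convergence; once uniform convergence is in hand, the exchange of $\sum$ and $\int$ is routine by dominated convergence, and the rest is pure algebra on gamma functions.
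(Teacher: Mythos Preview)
The paper does not supply its own proof of this lemma: it is quoted directly from the references \cite{miller-ross,osler,podlubny} and used as a black box. So there is no paper proof to compare against.

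Your argument is correct and is in fact the standard derivation one finds in those references (Osler's original proof proceeds exactly by Taylor-expanding the analytic factor about the upper limit and interchanging sum and integral). The algebra with the gamma functions is right: $\binom{-\alpha}{n}=\dfrac{(-1)^n\Gamma(\alpha+n)}{n!\,\Gamma(\alpha)}$ is precisely the coefficient that appears after rewriting the integral as $\dfrac{\Gamma(\alpha+n)}{\Gamma(\alpha)}I_{a+}^{\alpha+n}f(t)$.

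You are also right to flag the one genuine subtlety, namely that ``$g$ analytic on a neighbourhood of $[a,b]$'' does not automatically force the Taylor series of $g$ centred at $t$ to converge on the whole segment $[a,t]$; the radius of convergence is governed by complex singularities and could in principle be shorter than $t-a$. The classical sources implicitly assume (or hypothesise) that $g$ extends holomorphically to a disk containing $[a,t]$, and the paper's later applications only ever invoke the lemma with $g=a(\cdot)$ assumed analytic on $[0,T]$ in exactly this sense. With that understanding your justification of the $\sum$--$\int$ swap via uniform convergence and dominated convergence is complete.
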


\subsection{Fractional differential equations with variable coefficients}

The following multi-term fractional differential equation with time-dependent continuous variable coefficients was investigated and solved in \cite{analitical}, with a more general version in \cite{RRS}: 
\begin{equation}\label{eq1}
^{C}D_{0+}^{\beta_0}y(t)+\sum_{i=1}^{m}a_i(t) ^{C}D_{0+}^{\beta_i}y(t)=b(t),\quad t\in[0,T],\; m\in\mathbb{N},
\end{equation}
under the initial conditions
\begin{equation}\label{eq2}
\left(\frac{\mathrm{d}}{\mathrm{d}t}\right)^k y(t)\Big|_{t=+0}=c_k\in\mathbb{R},\qquad k=0,1,\ldots,n_0-1, 
\end{equation}
where $a_i,b\in C[0,T]$, $\mathrm{Re}(\beta_i)\geq0$, and $n_i=\lfloor\mathrm{Re}(\beta_i)\rfloor+1$ for $i=0,1,\ldots,m-1$, and $\mathrm{Re}(\beta_0)>\mathrm{Re}(\beta_1)>\ldots>\mathrm{Re}(\beta_m)\geqslant0$ where if $\mathrm{Re}(\beta_m)=0$ then we assume $\mathrm{Im}(\beta_m)=0$ as well. As a special case of the problem above, equation \eqref{eq1} was also studied under homogeneous initial conditions:
\begin{equation}\label{eq4}
\left(\frac{\mathrm{d}}{\mathrm{d}t}\right)^k y(t)\Big|_{t=+0}=0,\qquad k=0,1,\ldots,n_0-1.
\end{equation}
The solution found in \cite{RRS} was given in the following special function space: \[C^{n_0-1,\beta_0}[0,T]:=\biggl\{y\in C^{n_0-1}[0,T]\; : \; ^{C}D_{0+}^{\beta_0}y\in C[0,T]\biggr\},\]
endowed with the norm
\[\big\|y\big\|_{C^{n_0-1,\beta_0}[0,T]}=\sum_{k=0}^{n_0-1}\left\|\left(\frac{\mathrm{d}}{\mathrm{d}t}\right)^k y\right\|_{C[0,T]}+\Big\|\,^{C}D_{0+}^{\beta_0}y\Big\|_{C[0,T]}.\]
According to \cite[Theorem 3.8]{RRS} and \cite[Remark 2.7]{RRS}, we have the following result.

\begin{theorem}\label{thm3.1}
The initial value problem given by \eqref{eq1} and \eqref{eq4}, with all notation and terminology as defined above, has a unique solution $y\in C^{n_0-1,\beta_0}[0,T]$, and it is given by the following formula:
\begin{equation}\label{for27}
y(t)=\sum_{k=0}^{+\infty}(-1)^k I_{0+}^{\beta_0}\left(\sum_{i=1}^{m}a_i(t)I_{0+}^{\beta_0-\beta_i}\right)^k b(t).
\end{equation}
\end{theorem}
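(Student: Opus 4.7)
The plan is to reduce the initial value problem \eqref{eq1}--\eqref{eq4} to an equivalent second-kind Volterra integral equation on $C[0,T]$ and then solve it via a Neumann series; the resulting series, after pulling back through the reduction, will be exactly \eqref{for27}.

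First I would make the substitution $z(t):={^{C}D_{0+}^{\beta_0}}y(t)$, which by the definition of $C^{n_0-1,\beta_0}[0,T]$ lies in $C[0,T]$. Because the homogeneous conditions \eqref{eq4} force the Taylor polynomial of $y$ at $0$ of every order $\leq n_0-1$ (hence of every order $\leq n_i-1$, since $n_i\leq n_0$) to vanish, the expression \eqref{alternative} collapses to ${^{C}D_{0+}^{\beta_i}}y=D_{0+}^{\beta_i}y$ for each $i$. The standard inversion identity $y=I_{0+}^{\beta_0}z$ (valid under \eqref{eq4}) combined with the semigroup law $D_{0+}^{\beta_i}I_{0+}^{\beta_0}=I_{0+}^{\beta_0-\beta_i}$ (valid when $\mathrm{Re}(\beta_0)>\mathrm{Re}(\beta_i)$) then yields ${^{C}D_{0+}^{\beta_i}}y=I_{0+}^{\beta_0-\beta_i}z$ for $i=1,\ldots,m$. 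Substituting into \eqref{eq1} turns it into the scalar Volterra equation
\[
z(t)+\sum_{i=1}^{m}a_i(t)\,I_{0+}^{\beta_0-\beta_i}z(t)=b(t),\qquad t\in[0,T].
\]

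Next, writing $Kz(t):=\sum_{i=1}^{m}a_i(t)\,I_{0+}^{\beta_0-\beta_i}z(t)$, this reads $(I+K)z=b$. Setting $\delta:=\min_{1\leq i\leq m}\mathrm{Re}(\beta_0-\beta_i)>0$ and $M:=\max_{1\leq i\leq m}\|a_i\|_{C[0,T]}$, iteration of the elementary estimate $\|I_{0+}^{\alpha}f\|_{C[0,t]}\leq \frac{t^{\mathrm{Re}(\alpha)}}{\Gamma(\mathrm{Re}(\alpha)+1)}\|f\|_{C[0,t]}$ together with the semigroup law applied inside each nested composition produces a Mittag-Leffler-type bound of the form
\[
\|K^k b\|_{C[0,T]}\;\leq\;(mM)^k\,\frac{T^{k\delta}}{\Gamma(k\delta+1)}\,\|b\|_{C[0,T]},
\]
which is summable in $k$. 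Consequently the Neumann series $z=\sum_{k=0}^{\infty}(-1)^k K^k b$ converges absolutely and uniformly on $[0,T]$ and solves $(I+K)z=b$; applying $I_{0+}^{\beta_0}$ term-by-term (which is permissible by uniform convergence and boundedness of $I_{0+}^{\beta_0}$ on $C[0,T]$) produces precisely \eqref{for27}.

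Finally I would verify that the constructed $y$ lies in $C^{n_0-1,\beta_0}[0,T]$ and satisfies \eqref{eq4}, both of which follow from the fact that each summand has the shape $I_{0+}^{\beta_0}(\text{continuous function})$, whose derivatives at $0$ vanish up to order $n_0-1$. Reversing the substitution confirms \eqref{eq1}, and uniqueness follows from the injectivity of $I_{0+}^{\beta_0}$ on $C[0,T]$ together with the invertibility of $I+K$ established above. The main obstacle, as I see it, is the careful bookkeeping in the iterated norm estimate: since $K^k$ expands as an $m^k$-fold sum over multi-indices $(i_1,\ldots,i_k)$ of nested compositions of fractional integrals interlaced with the multipliers $a_{i_1}(t),\ldots,a_{i_k}(t)$, one has to pull the multipliers out of the nested integrals using pointwise bounds and then collapse the remaining composition via the semigroup law, combining all $m^k$ cross-terms while still exhibiting the single $\Gamma(k\delta+1)^{-1}$ factor that drives convergence.
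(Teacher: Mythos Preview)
The paper does not prove this theorem itself: it is quoted verbatim from \cite[Theorem~3.8 and Remark~2.7]{RRS} (building on \cite{analitical}), so there is no in-paper argument to compare against. Your plan---substitute $z={^{C}D_{0+}^{\beta_0}}y$, use the homogeneous initial data to rewrite the lower-order Caputo derivatives as $I_{0+}^{\beta_0-\beta_i}z$, obtain the Volterra equation $(I+K)z=b$, and invert by a Neumann series whose convergence is driven by a gamma-function estimate on the iterates---is exactly the standard route taken in those references, and it is correct in outline.

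One small caveat on the displayed estimate: the clean bound $(mM)^k T^{k\delta}/\Gamma(k\delta+1)$ is a little optimistic as stated, because the $m^k$ cross-terms in $K^k$ carry different total orders $\sum_{j}\mathrm{Re}(\beta_0-\beta_{i_j})\geq k\delta$, and $\Gamma$ is not monotone on all of $(0,\infty)$ (also, for genuinely complex $\beta_i$ the kernel bound involves $|\Gamma(\alpha)|$ rather than $\Gamma(\mathrm{Re}\,\alpha)$). What the inductive argument actually delivers is
\[
\|K^k b\|_{C[0,T]}\;\leq\;M^k\|b\|_{C[0,T]}\sum_{i_1,\ldots,i_k}\frac{T^{\sum_j\mathrm{Re}(\beta_0-\beta_{i_j})}}{\bigl|\Gamma\bigl(1+\sum_j(\beta_0-\beta_{i_j})\bigr)\bigr|},
\]
which is still summable in $k$ since $1/\Gamma$ decays super-exponentially along rays in the right half-plane. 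You already flag this bookkeeping as the main obstacle, so no change of strategy is needed---just replace the single gamma factor by the honest multi-index sum when you write it up.
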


Notice that the representation \eqref{for27} involves the composition of Riemann--Liouville fractional integrals with function multipliers. In the case of constant coefficients, due to linearity and composition properties of Riemann--Liouville fractional integrals, we can get a closed-form representation in terms of the Mittag-Leffler function \cite[Theorem 4.3]{RRS}. For the general case of time-dependent continuous variable coefficients, it seems that representation \eqref{for27} cannot be improved, since the variable coefficients appear inside the composition of fractional integrals.

In this article, we shall consider the case of one variable coefficient, namely the following equation with $\mathrm{Re}(\beta)>0$ and $a,b\in C[0,T]$:
\begin{equation}\label{eq1here}
^{C}D_{0+}^{\beta}y(t)+a(t)y(t)=b(t),\qquad t\in[0,T],
\end{equation}
under the initial conditions
\begin{equation}\label{eq4here}
\left(\frac{\mathrm{d}}{\mathrm{d}t}\right)^k y(t)\Big|_{t=+0}=0,\qquad k=0,1,\ldots,n-1,\quad n=\lfloor\mathrm{Re}(\beta)\rfloor+1.
\end{equation}
From Theorem \ref{thm3.1} above, it follows that the fractional differential equation \eqref{eq1here} and \eqref{eq4here} has a unique solution $y\in C^{n-1,\beta}[0,T]$, where $n-1\leq\mathrm{Re}(\beta)<n$, and it is represented by:
\[y(t)=\sum_{k=0}^{\infty}(-1)^kI^{\beta}_{0+}\Big(a(t)I_{0+}^{\beta}\Big)^kb(t)=I^{\beta}_{0+}\left(\sum_{k=0}^{\infty}(-1)^k\Big(a(t)I_{0+}^{\beta}\Big)^kb(t)\right).\]
Our task in this paper is to show that this representation of the solution can be improved and given in a more explicit form, avoiding the composition of Riemann--Liouville fractional integrals given by $(a(t)I_{0+}^{\beta})^k$, which will be more suitable for explicit calculation and approximation.

\subsection{Gamma functions and related topics}

\begin{definition}[\cite{whittaker-watson}]
The gamma function $\Gamma(z)$ is defined by
\[
\Gamma(z)=\int_0^{\infty}t^{z-1}e^{-t}\,\mathrm{d}t,\qquad\mathrm{Re}(z)>0,
\]
and by analytic continuation using the functional equation $\Gamma(z+1)=z\Gamma(z)$ for $\mathrm{Re}(z)\leq0$. This defines a function which is analytic on the whole complex plane except the points $z=0,-1,-2,-3,\dots$, while the function $\frac1{\Gamma(z)}$ is an entire function of $z\in\mathbb{C}$, with zeros at the singularities of the gamma function.
\end{definition}

Throughout this paper, we will use the notation $\binom{\alpha}{\beta}$ and $\alpha!$ to denote the following expressions:
\[
\binom{\alpha}{\beta}=\frac{\alpha!}{\beta!(\alpha-\beta)!}=\frac{\Gamma(\alpha+1)}{\Gamma(\beta+1)\Gamma(\alpha-\beta+1)}\quad\quad\text{ and }\quad\quad\alpha!=\Gamma(\alpha+1),
\]
even when the numbers $\alpha$ and $\beta$ are not necessarily natural numbers. We will also frequently make use of the fact that
\[
\sum_{n=0}^{\infty}\frac{\cdots}{(n-k)!(N-n)!\cdots}=\sum_{n=k}^{N}\frac{\cdots}{n!(N-n)!\cdots},
\]
where the $\cdots$ represent any possible quantity (constant or variable, depending on anything we want, but the same on both sides of the equation). This identity, reducing an infinite series to a finite one, is valid since dividing by a factorial of a negative integer always gives zero, due to the zeros of the function $\frac1{\Gamma(z)}$.

We shall also frequently make use of the following lemma on series.

\begin{lemma} \label{Lem:hypergeom}
Let $a,b,c\in\mathbb{C}$ be constants. The series
\[
\sum_{n=0}^{\infty}\frac{1}{(a-n)!(b-n)!(c+n)!n!}
\]
converges to the value
\[
\frac{(a+b+c)!}{a!b!(a+c)!(b+c)!},
\]
provided that either $\mathrm{Re}(a+b+c)>-1$ or one of $a,b$ is in $\mathbb{N}$.
\end{lemma}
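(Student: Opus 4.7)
The plan is to recognize this as a disguised Gauss hypergeometric series evaluated at $1$, and then apply the classical Gauss summation theorem.

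First I would rewrite each factorial in the summand as a Pochhammer symbol. Using the standard identity
\[
\frac{1}{\Gamma(a-n+1)}=\frac{(-1)^n(-a)_n}{\Gamma(a+1)},
\]
which follows from the gamma reflection (or directly from $\Gamma(a+1)/\Gamma(a-n+1)=a(a-1)\cdots(a-n+1)=(-1)^n(-a)_n$), and the corresponding identity for $b$, together with $\Gamma(c+n+1)=\Gamma(c+1)(c+1)_n$, the general term becomes
\[
\frac{1}{(a-n)!(b-n)!(c+n)!\,n!}=\frac{1}{a!\,b!\,c!}\cdot\frac{(-a)_n(-b)_n}{(c+1)_n\,n!},
\]
since the two factors of $(-1)^n$ cancel. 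Summing over $n$ identifies the series as
\[
\frac{1}{a!\,b!\,c!}\;{}_2F_1(-a,-b;c+1;1).
\]

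Next I would apply Gauss's summation theorem, which asserts that
\[
{}_2F_1(\alpha,\beta;\gamma;1)=\frac{\Gamma(\gamma)\Gamma(\gamma-\alpha-\beta)}{\Gamma(\gamma-\alpha)\Gamma(\gamma-\beta)}
\]
whenever $\mathrm{Re}(\gamma-\alpha-\beta)>0$. With $\alpha=-a$, $\beta=-b$, $\gamma=c+1$, the convergence condition reads $\mathrm{Re}(c+1+a+b)>0$, i.e.\ $\mathrm{Re}(a+b+c)>-1$, which matches one of the two hypotheses. The evaluation gives
\[
{}_2F_1(-a,-b;c+1;1)=\frac{\Gamma(c+1)\Gamma(a+b+c+1)}{\Gamma(a+c+1)\Gamma(b+c+1)}=\frac{c!\,(a+b+c)!}{(a+c)!\,(b+c)!},
\]
and combining with the prefactor $1/(a!\,b!\,c!)$ yields the claimed value.

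For the alternative hypothesis that one of $a,b$ lies in $\mathbb{N}$, say $a\in\mathbb{N}$, the Pochhammer factor $(-a)_n$ vanishes for $n>a$, so the series is actually a finite sum indexed by $0\le n\le a$ (matching the fact that $1/(a-n)!=0$ for $n>a$ on the original side). The identity then reduces to the Chu--Vandermonde form of Gauss's theorem, which is a polynomial identity valid without any convergence restriction on the remaining parameters; invoking that version (or the standard argument that the Chu--Vandermonde case is just the terminating specialisation of Gauss's theorem) disposes of this case.

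The main obstacle is essentially bookkeeping: making sure the sign factors $(-1)^n$ cancel correctly and that the reduction to ${}_2F_1$ matches the convergence hypothesis of Gauss exactly; once the series is cast in standard hypergeometric form, the evaluation is immediate from a classical identity.
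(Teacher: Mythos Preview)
Your proposal is correct and matches the paper's proof: both recast the sum as $\frac{1}{a!\,b!\,c!}\,{}_2F_1(-a,-b;c+1;1)$ via the reflection identity for gamma quotients and then invoke Gauss's summation theorem under $\mathrm{Re}(a+b+c)>-1$. For the terminating case $a\in\mathbb{N}$ you cite Chu--Vandermonde directly as a polynomial identity, whereas the paper instead fixes $a=N$, notes both sides of the resulting finite identity are analytic in $b,c$, and extends from the half-plane $\mathrm{Re}(N+b+c)>-1$ to all of $\mathbb{C}^2$ by analytic continuation; either device is standard and suffices.
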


\begin{proof}
Consider the standard power series for the hypergeometric function $\prescript{}{2}F_1$, which has radius of convergence $1$:
\[
\prescript{}{2}F_1(-a,-b;1+c;z)=\sum_{n=0}^{\infty}\frac{\Gamma(-a+n)\Gamma(-b+n)\Gamma(1+c)}{\Gamma(-a)\Gamma(-b)\Gamma(1+c+n)}\cdot\frac{z^n}{n!}.
\]
Gauss's hypergeometric theorem states that the value of this function at $z=1$, provided that $\mathrm{Re}(1+c)>\mathrm{Re}(-a-b)$, is given by
\[
\prescript{}{2}F_1(-a,-b;1+c;1)=\frac{\Gamma(1+c)\Gamma(1+c+a+b)}{\Gamma(1+c+a)\Gamma(1+c+b)}.
\]

A standard identity on the gamma function is
\[
\frac{\Gamma(x+n)}{\Gamma(x)}=(-1)^n\cdot\frac{\Gamma(1-x)}{\Gamma(1-x-n)},\quad x\in\mathbb{C},\;n\in\mathbb{Z}.
\]
It is easy to prove from writing each quotient of gamma functions as a finite product of terms: the left-hand side is $x(x+1)(x+2)\dots(x+n-1)$ and the right-hand side similarly. This also justifies the validity of the identity for all $x\in\mathbb{C}$, without needing to avoid singularities of the gamma function.

Therefore, we have
\begin{align*}
\sum_{n=0}^{\infty}\frac{1}{(a-n)!(b-n)!(c+n)!n!}&=\sum_{n=0}^{\infty}\frac{1}{\Gamma(1+a-n)\Gamma(1+b-n)\Gamma(1+c+n)n!} \\
&=\sum_{n=0}^{\infty}\frac{\Gamma(-a+n)\Gamma(-b+n)}{\Gamma(1+a)\Gamma(-a)\Gamma(1+b)\Gamma(-b)\Gamma(1+c+n)n!} \\
&=\frac{\prescript{}{2}F_1(-a,-b;1+c;1)}{\Gamma(1+a)\Gamma(1+b)\Gamma(1+c)} \\
&=\frac{1}{\Gamma(1+a)\Gamma(1+b)\Gamma(1+c)}\cdot\frac{\Gamma(1+c)\Gamma(1+c+a+b)}{\Gamma(1+c+a)\Gamma(1+c+b)} \\
&=\frac{(a+b+c)!}{a!b!(a+c)!(b+c)!}.
\end{align*}
So the infinite series is convergent and the result follows in the case that $\mathrm{Re}(a+b+c)>-1$.

If we assume that one of $a,b$ is in $\mathbb{N}$, then the series is actually finite, since all but finitely many of the terms are zero. Fixing (without loss of generality) $a=N\in\mathbb{N}$, the result becomes
\begin{equation}
\label{N:analcont}
\sum_{n=0}^{N}\frac{1}{(N-n)!(b-n)!(c+n)!n!}=\frac{(N+b+c)!}{N!b!(N+c)!(b+c)!},
\end{equation}
which we now know is true under the assumption $\mathrm{Re}(N+b+c)>-1$. The variables $b$ and $c$ are still free in the complex plane subject to this restriction. Both sides of the equation \eqref{N:analcont} are analytic in $b$ and $c$, since the series is finite and therefore convergent, so we can use analytic continuation to deduce that \eqref{N:analcont} is valid for all $b,c\in\mathbb{C}$.
\end{proof}

\section{Main results}\label{mainresults}

The starting point is the following formula:
\begin{equation}
\label{yformula}
y(t)=\sum_{k=0}^{\infty}(-1)^kI^{\beta}_{0+}\Big(a(t)I_{0+}^{\beta}\Big)^kb(t),
\end{equation}
where $a(t)$, $b(t)$ are continuous functions on $[0,T]$ and $\mathrm{Re}(\beta)>0$. Here, in order to be able to use the fractional Leibniz rule of Lemma \ref{Lem:FLR}, we must also assume that $a(t)$ is an analytic function. (The textbook of Miller \& Ross \cite[p. 97]{miller-ross} mentions unpublished work of E. R. Love in which he weakened the analyticity assumption for the fractional Leibniz rule, but we have been unable to find this work, if indeed it was ever published.)

Let us use the following notation for the $k$-summand:
\[
S_k(t):=I^{\beta}_{0+}\Big(a(t)I_{0+}^{\beta}\Big)^kb(t).
\]
The goal of this section is to find a formula for the summand $S_k(t)$, and hence for the solution function $y(t)$, as an infinite series without involving any composition of fractional integrals.

\subsection{Early cases}

Let us start by considering the first few cases of $k$, in order to spot a pattern and write down a formula for general $k$ to be proved by induction on $k$.

\medskip

\textbf{The case $\boldsymbol{k=0}$.} Here the result is trivial: \[S_0(t)=I_{0+}^{\beta}b(t).\]

\medskip

\textbf{The case $\boldsymbol{k=1}$.} Here the result follows from a single application of the fractional Leibniz rule:
\begin{align*}
S_1(t)&=I^{\beta}_{0+}\Big(a(t)I_{0+}^{\beta}b(t)\Big)=\sum_{n=0}^{\infty}\binom{-\beta}{n}\Big[a^{(n)}(t)\Big]\Big[I_{0+}^{2\beta+n}b(t)\Big]
\end{align*}

\medskip

\textbf{The case $\boldsymbol{k=2}$.} Here we need to use the fractional Leibniz rule and then the classical Leibniz rule:
\begin{align*}
S_2(t)&=I^{\beta}_{0+}\Big(a(t)S_1(t)\Big)=I^{\beta}_{0+}\left[\sum_{n=0}^{\infty}\binom{-\beta}{n}\Big[a(t)a^{(n)}(t)\Big]\Big[I_{0+}^{2\beta+n}b(t)\Big]\right] \\
&=\sum_{n=0}^{\infty}\sum_{m=0}^{\infty}\binom{-\beta}{n}\binom{-\beta}{m}D^m\Big(a(t)D^na(t)\Big)\cdot I_{0+}^{3\beta+n+m}b(t) \\
&=\sum_{n=0}^{\infty}\sum_{m=0}^{\infty}\sum_{i=0}^{m}\binom{-\beta}{n}\binom{-\beta}{m}\binom{m}{i}D^{i}a(t)\cdot D^{m+n-i}a(t)\cdot I_{0+}^{3\beta+n+m}b(t).
\end{align*}
Putting $p=n+m$, and rearranging the sums so that the $i$-sum is outside the $m$-sum, we have:
\begin{align*}
S_2(t)&=\sum_{p=0}^{\infty}\sum_{i=0}^{\infty}\sum_{m=i}^{p}\binom{-\beta}{p-m}\binom{-\beta}{m}\binom{m}{i}D^{i}a(t)\cdot D^{p-i}a(t)\cdot I_{0+}^{3\beta+p}b(t).
\end{align*}
The innermost sum (over $m$) can be simplified using Lemma \ref{Lem:hypergeom}:
\begin{align*}
\sum_{m=i}^{p}\binom{-\beta}{p-m}\binom{-\beta}{m}\binom{m}{i}&=\sum_{m=i}^{p}\frac{(-\beta)!(-\beta)!}{(-\beta-p+m)!(p-m)!(-\beta-m)!(m-i)!i!} \\
&=\sum_{n=0}^{p-i}\frac{(-\beta)!(-\beta)!}{(-\beta-p+n+i)!(p-n-i)!(-\beta-n-i)!n!i!} \\
&=\frac{(-\beta)!(-\beta)!}{i!}\sum_{n=0}^{\infty}\frac{1}{(p-i-n)!(-\beta-i-n)!(-\beta-p+i+n)!n!} \\
&=\frac{(-\beta)!(-\beta)!}{i!}\cdot\frac{(-2\beta-i)!}{(p-i)!(-\beta-i)!(-\beta)!(-2\beta-p)!}.
\end{align*}
Therefore, re-labelling $p$ as $n$, we have the final answer for $k=2$:
\begin{align*}
S_2(t)&=\sum_{n=0}^{\infty}\left[\sum_{i=0}^{n}\frac{(-2\beta-i)!(-\beta)!}{(-2\beta-n)!(-\beta-i)!(n-i)!i!}D^{i}a(t)\cdot D^{n-i}a(t)\right] I_{0+}^{3\beta+n}b(t).
\end{align*}

\medskip

\textbf{The case $\boldsymbol{k=3}$.} Here the manipulation is even more complicated. We start by applying the fractional Leibniz rule and then the classical Leibniz rule for a product of three functions:
\begin{align*}
S_3(t)&=I^{\beta}_{0+}\Big(a(t)S_2(t)\Big) \\
&=I^{\beta}_{0+}\left[\sum_{n=0}^{\infty}\sum_{i=0}^{n}\frac{(-2\beta-i)!(-\beta)!}{(-2\beta-n)!(-\beta-i)!(n-i)!i!}a(t)\cdot D^{i}a(t)\cdot D^{n-i}a(t)\cdot I_{0+}^{3\beta+n}b(t)\right] \\
&=\sum_{n=0}^{\infty}\sum_{m=0}^{\infty}\sum_{i=0}^{\infty}\binom{-\beta}{m}\frac{(-2\beta-i)!(-\beta)!}{(-2\beta-n)!(-\beta-i)!(n-i)!i!}D^m\Big[a(t)\cdot D^{i}a(t)\cdot D^{n-i}a(t)\Big] I_{0+}^{4\beta+n+m}b(t) \\
&=\sum_{n=0}^{\infty}\sum_{m=0}^{\infty}\sum_{i=0}^{\infty}\frac{(-2\beta-i)!(-\beta)!(-\beta)!}{(-2\beta-n)!(-\beta-m)!(-\beta-i)!(n-i)!i!m!}D^m\Big[a(t)\cdot D^{i}a(t)\cdot D^{n-i}a(t)\Big] I_{0+}^{4\beta+n+m}b(t) \\
&=\sum_{n=0}^{\infty}\sum_{m=0}^{\infty}\sum_{i=0}^{\infty}\sum_{r_1+r_2\leq m}\frac{(-2\beta-i)!(-\beta)!(-\beta)!}{(-2\beta-n)!(-\beta-m)!(-\beta-i)!(n-i)!i!m!}\cdot\frac{m!}{r_1!r_2!(m-r_1-r_2)!} \\ &\hspace{5cm}\times D^{r_1}a(t)\cdot D^{i+r_2}a(t)\cdot D^{n-i+m-r_1-r_2}a(t)\cdot I_{0+}^{4\beta+n+m}b(t).
\end{align*}
Putting $p=m+n$, and rearranging sums so that the $m$-series is the innermost one, we get:
\begin{align*}
&S_3(t)=\sum_{p=0}^{\infty}\sum_{r_1=0}^{\infty}\sum_{r_2=0}^{\infty}\sum_{i=0}^{\infty}\sum_{m=r_1+r_2}^{p}\frac{(-2\beta-i)!(-\beta)!(-\beta)!}{(-2\beta-p+m)!(-\beta-m)!(-\beta-i)!(p-m-i)!i!r_1!r_2!(m-r_1-r_2)!} \\ &\hspace{5cm}\times D^{r_1}a(t)\cdot D^{i+r_2}a(t)\cdot D^{p-i-r_1-r_2}a(t)\cdot I_{0+}^{4\beta+p}b(t) \\
&=\sum_{p=0}^{\infty}\sum_{r_1=0}^{\infty}\sum_{r_2=0}^{\infty}\sum_{i=0}^{\infty}\left[\sum_{m'=0}^{p-r_1-r_2}\frac{1}{(-2\beta-p+m'+r_1+r_2)!(-\beta-m'-r_1-r_2)!(p-m'-r_1-r_2-i)!m'!}\right] \\ &\hspace{3cm}\times\frac{(-2\beta-i)!(-\beta)!(-\beta)!}{(-\beta-i)!i!r_1!r_2!} D^{r_1}a(t)\cdot D^{i+r_2}a(t)\cdot D^{p-i-r_1-r_2}a(t)\cdot I_{0+}^{4\beta+p}b(t).
\end{align*}
By Lemma \ref{Lem:hypergeom}, the inner sum over $m'$ is
\[
\frac{(-3\beta-r_1-r_2-i)!}{(-\beta-r_1-r_2)!(p-r_1-r_2-i)!(-3\beta-p)!(-2\beta-i)!},
\]
so we have
\begin{multline*}
S_3(t)=\sum_{p=0}^{\infty}\sum_{r_1=0}^{\infty}\sum_{r_2=0}^{\infty}\sum_{i=0}^{\infty}\frac{(-3\beta-r_1-r_2-i)!(-\beta)!(-\beta)!}{(-\beta-r_1-r_2)!(p-r_1-r_2-i)!(-3\beta-p)!(-\beta-i)!i!r_1!r_2!} \\ \times D^{r_1}a(t)\cdot D^{i+r_2}a(t)\cdot D^{p-i-r_1-r_2}a(t)\cdot I_{0+}^{4\beta+p}b(t).
\end{multline*}
Putting $i_1=r_1$ and $i_2=i+r_2$, this becomes
\begin{align*}
S_3(t)&=\sum_{p=0}^{\infty}\sum_{i_1=0}^{\infty}\sum_{i_2=0}^{\infty}\sum_{i=0}^{\infty}\frac{(-3\beta-i_1-i_2)!(-\beta)!(-\beta)!}{(-\beta-i_1-i_2+i)!(p-i_1-i_2)!(-3\beta-p)!(-\beta-i)!i!i_1!(i_2-i)!} \\ &\hspace{5cm}\times D^{i_1}a(t)\cdot D^{i_2}a(t)\cdot D^{p-i_1-i_2}a(t)\cdot I_{0+}^{4\beta+p}b(t) \\
&=\sum_{p=0}^{\infty}\sum_{i_1=0}^{\infty}\sum_{i_2=0}^{\infty}\left[\sum_{i=0}^{\infty}\frac{1}{(-\beta-i_1-i_2+i)!(-\beta-i)!i!(i_2-i)!}\right] \\ &\hspace{2cm}\times\frac{(-3\beta-i_1-i_2)!(-\beta)!(-\beta)!}{(p-i_1-i_2)!(-3\beta-p)!i_1!} D^{i_1}a(t)\cdot D^{i_2}a(t)\cdot D^{p-i_1-i_2}a(t)\cdot I_{0+}^{4\beta+p}b(t).
\end{align*}
By Lemma \ref{Lem:hypergeom} again, the inner sum over $i$ is
\[
\frac{(-2\beta-i_1)!}{(-\beta)!i_2!(-2\beta-i_1-i_2)!(-\beta-i_1)!}.
\]
Therefore, re-labelling $p$ as $n$, we have the final answer for $k=3$:
\begin{align*}
S_3(t)&=\sum_{p=0}^{\infty}\sum_{i_1=0}^{\infty}\sum_{i_2=0}^{\infty}\frac{(-3\beta-i_1-i_2)!(-2\beta-i_1)!(-\beta)!}{(-3\beta-p)!(-2\beta-i_1-i_2)!(-\beta-i_1)!(p-i_1-i_2)!i_1!i_2!} \\ &\hspace{5cm}\times D^{i_1}a(t)\cdot D^{i_2}a(t)\cdot D^{p-i_1-i_2}a(t)\cdot I_{0+}^{4\beta+p}b(t) \\
&=\sum_{n=0}^{\infty}\Bigg[\sum_{i_1+i_2\leq n}\frac{(-3\beta-i_1-i_2)!(-2\beta-i_1)!(-\beta)!}{(-3\beta-n)!(-2\beta-i_1-i_2)!(-\beta-i_1)!(n-i_1-i_2)!i_1!i_2!} \\ &\hspace{5cm}\times D^{i_1}a(t)\cdot D^{i_2}a(t)\cdot D^{n-i_1-i_2}a(t)\Bigg] I_{0+}^{4\beta+n}b(t).
\end{align*}

\subsection{The general case}

Having solved the problem for the first few values of $k$, we are now in a position to extrapolate the above results and guess a formula for general $k$, which we can then prove by induction.

\begin{theorem}
\label{Thm:Sk}
For all $k\geq1$, with all notation defined as above, we have
\begin{multline}
\label{newrep}
S_k(t)=\sum_{n=0}^{\infty}\Bigg[\sum_{i_1+i_2+\dots+i_k=n}\frac{(-\beta-n)!(-2\beta-i_1)!(-3\beta-i_1-i_2)!\dots(-k\beta-i_1-\dots-i_{k-1})!}{(-\beta-i_1)!(-2\beta-i_1-i_2)!\dots(-(k-1)\beta-i_1-\dots-i_{k-1})!(-k\beta-n)!} \\\times\frac{n!}{i_1!\dots i_k!}a^{(i_1)}(t)\dots a^{(i_k)}(t)\Bigg]\binom{-\beta}{n}I_{0+}^{(k+1)\beta+n}b(t).
\end{multline}
\end{theorem}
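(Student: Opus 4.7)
My plan is to prove Theorem \ref{Thm:Sk} by induction on $k$, based on the recursive identity $S_{k+1}(t) = I^{\beta}_{0+}\bigl(a(t) S_k(t)\bigr)$. The cases $k=1$, $k=2$, $k=3$ already worked out in the preceding subsection provide both the base case and a blueprint for the inductive step; indeed, the formula \eqref{newrep} was evidently extracted by matching the explicit expressions for $S_1$, $S_2$, and $S_3$.

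For the inductive step, I assume \eqref{newrep} holds for some $k \geq 1$ and substitute this form into $a(t) S_k(t)$. I then apply the fractional Leibniz rule (Lemma \ref{Lem:FLR}) to the outer $I^{\beta}_{0+}$, introducing a new index $m$ with factor $\binom{-\beta}{m}$ and a $D^m$ acting on the $(k+1)$-fold product $a(t) \cdot a^{(i_1)}(t) \cdots a^{(i_k)}(t)$. The classical generalized Leibniz rule then distributes $D^m$ across these $k+1$ factors via a multinomial sum with indices $r_0, r_1, \ldots, r_k$ summing to $m$, producing the product $a^{(r_0)}(t) \cdot a^{(i_1 + r_1)}(t) \cdots a^{(i_k + r_k)}(t)$. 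This motivates the natural re-indexing $i'_1 := r_0$ and $i'_{j+1} := i_j + r_j$ for $j = 1, \ldots, k$, together with $p := n + m$, so that $i'_1 + \cdots + i'_{k+1} = p$ and the new fractional integral $I^{(k+2)\beta + p}_{0+} b(t)$ matches the shape of \eqref{newrep} at level $k+1$.

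After this re-indexing, the new outer indices $p, i'_1, \ldots, i'_k$ still leave $k$ residual inner summations (the excess of the original variables $n, m, r_0, \ldots, r_k, i_1, \ldots, i_{k-1}$ over the new outer ones, after all the constraints). Each of these inner sums will match exactly the hypothesis of Lemma \ref{Lem:hypergeom}, and successive applications of that lemma telescope them away one at a time, each producing a new ratio of the form $(-j\beta - \cdots)! / (-(j-1)\beta - \cdots)!$ in the coefficient. After $k$ such applications (mirroring the single application used for $k=2$ and the two applications used for $k=3$), the full factorial chain in \eqref{newrep} at level $k+1$ is assembled exactly as claimed.

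The principal obstacle will be the combinatorial bookkeeping of multiple summation variables: keeping track of which index is summed inside which, correctly performing each re-indexing, and verifying that each inner sum matches Lemma \ref{Lem:hypergeom} in precisely the right configuration. The target formula \eqref{newrep} is the crucial guide, since the telescoping structure of its factorial products dictates exactly which index ordering to use at each step. Convergence and interchange of summations are legitimate under the standing analyticity assumption on $a(t)$, which is required for the fractional Leibniz rule to apply at each stage.
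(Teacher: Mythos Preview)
Your proposal is correct and follows essentially the same route as the paper's proof: induction on $k$ via the fractional Leibniz rule, the classical multinomial Leibniz rule, the re-indexing $p=n+m$, $j_1=r_0$, $j_{s+1}=i_s+r_s$, and then $k$ successive applications of Lemma~\ref{Lem:hypergeom} to collapse the residual inner sums. The only organisational difference is that the paper performs the first application of Lemma~\ref{Lem:hypergeom} on the $n$-sum, then isolates the remaining coefficient identity (a $(k-1)$-fold sum over $i_1,\dots,i_{k-1}$) and proves it by a separate finite descent, whereas you describe all $k$ applications as a single telescoping sweep; mathematically these are the same argument.
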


\begin{proof}
We proceed by induction on $k$. The $k=1$ case of \eqref{newrep} is
\[
S_1(t)=\sum_{n=0}^{\infty}\Bigg[\sum_{i=n}\frac{(-\beta-n)!}{(-\beta-i)!} \cdot\frac{n!}{i!}a^{(i)}(t)\Bigg]\binom{-\beta}{n}I_{0+}^{2\beta+n}b(t)=\sum_{n=0}^{\infty}a^{(n)}(t)\binom{-\beta}{n}I_{0+}^{2\beta+n}b(t),
\]
and the $k=2$ case of \eqref{newrep} is
\begin{align*}
S_2(t)&=\sum_{n=0}^{\infty}\Bigg[\sum_{i_1+i_2=n}\frac{(-\beta-n)!(-2\beta-i_1)!}{(-\beta-i_1)!(-2\beta-n)!} \cdot\frac{n!}{i_1!i_2!}a^{(i_1)}(t)a^{(i_2)}(t)\Bigg]\binom{-\beta}{n}I_{0+}^{3\beta+n}b(t) \\
&=\sum_{n=0}^{\infty}\Bigg[\sum_{i=0}^{n}\frac{(-\beta-n)!(-2\beta-i)!}{(-\beta-i)!(-2\beta-n)!} \binom{n}{i}a^{(i)}(t)a^{(n-i)}(t)\Bigg]\binom{-\beta}{n}I_{0+}^{3\beta+n}b(t),
\end{align*}
both of which are correct according to the work done above.

Now let us assume the result is true for $S_k(t)$, and prove it for $S_{k+1}(t)$. Firstly, using the fractional Leibniz rule and then the classical Leibniz rule:
\begin{align*}
S_{k+1}(t)&=I^{\beta}_{0+}\Big(a(t)S_k(t)\Big) \\
&=I^{\beta}_{0+}\Bigg[\sum_{n=0}^{\infty}\binom{-\beta}{n}I_{0+}^{(k+1)\beta+n}b(t)\sum_{i_1+i_2+\dots+i_k=n}\frac{n!}{i_1!\dots i_k!}a(t)a^{(i_1)}(t)\dots a^{(i_k)}(t) \\&\hspace{2cm}\times\frac{(-\beta-n)!(-2\beta-i_1)!(-3\beta-i_1-i_2)!\dots(-k\beta-i_1-\dots-i_{k-1})!}{(-\beta-i_1)!(-2\beta-i_1-i_2)!\dots(-(k-1)\beta-i_1-\dots-i_{k-1})!(-k\beta-n)!}\Bigg] \\
&=\sum_{n=0}^{\infty}\sum_{m=0}^{\infty}\binom{-\beta}{n}\binom{-\beta}{m}I_{0+}^{(k+2)\beta+n+m}b(t)\sum_{i_1+i_2+\dots+i_k=n}\frac{n!}{i_1!\dots i_k!}D^m\Big[a(t)a^{(i_1)}(t)\dots a^{(i_k)}(t)\Big] \\&\hspace{2cm}\times\frac{(-\beta-n)!(-2\beta-i_1)!(-3\beta-i_1-i_2)!\dots(-k\beta-i_1-\dots-i_{k-1})!}{(-\beta-i_1)!(-2\beta-i_1-i_2)!\dots(-(k-1)\beta-i_1-\dots-i_{k-1})!(-k\beta-n)!} \\
&=\sum_{n=0}^{\infty}\sum_{m=0}^{\infty}\binom{-\beta}{n}\binom{-\beta}{m}I_{0+}^{(k+2)\beta+n+m}b(t)\sum_{i_1+i_2+\dots+i_k=n}\frac{n!}{i_1!\dots i_k!} \\ &\hspace{2cm}\times\sum_{r_1+r_2+\dots+r_{k+1}=m}\frac{m!}{r_1!\dots r_{k+1}!}a^{(r_1)}(t)a^{(i_1+r_2)}(t)\dots a^{(i_k+r_{k+1})}(t) \\&\hspace{2cm}\times\frac{(-\beta-n)!(-2\beta-i_1)!(-3\beta-i_1-i_2)!\dots(-k\beta-i_1-\dots-i_{k-1})!}{(-\beta-i_1)!(-2\beta-i_1-i_2)!\dots(-(k-1)\beta-i_1-\dots-i_{k-1})!(-k\beta-n)!} \\
&=\sum_{n=0}^{\infty}\sum_{m=0}^{\infty}I_{0+}^{(k+2)\beta+n+m}b(t)\sum_{\substack{i_1+i_2+\dots+i_k=n \\ r_1+r_2+\dots+r_{k+1}=m}}\frac{1}{i_1!\dots i_k!r_1!\dots r_{k+1}!} a^{(r_1)}(t)a^{(i_1+r_2)}(t)\dots a^{(i_k+r_{k+1})}(t) \\&\hspace{1cm}\times\frac{(-\beta)!(-\beta)!(-2\beta-i_1)!(-3\beta-i_1-i_2)!\dots(-k\beta-i_1-\dots-i_{k-1})!}{(-\beta-i_1)!(-2\beta-i_1-i_2)!\dots(-(k-1)\beta-i_1-\dots-i_{k-1})!(-k\beta-n)!(-\beta-m)!}
\end{align*}
Now, putting $p=n+m$ and then separating out $i_k$ and $r_{k+1}$ among the others, we have:
\begin{align*}
S_{k+1}(t)&=\sum_{p=0}^{\infty}\sum_{n=0}^{\infty}I_{0+}^{(k+2)\beta+p}b(t)\sum_{\substack{i_1+i_2+\dots+i_k=n \\ r_1+r_2+\dots+r_{k+1}=p-n}}\frac{1}{i_1!\dots i_k!r_1!\dots r_{k+1}!} a^{(r_1)}(t)a^{(i_1+r_2)}(t)\dots a^{(i_k+r_{k+1})}(t) \\&\hspace{1cm}\times\frac{(-\beta)!(-\beta)!(-2\beta-i_1)!(-3\beta-i_1-i_2)!\dots(-k\beta-i_1-\dots-i_{k-1})!}{(-\beta-i_1)!(-2\beta-i_1-i_2)!\dots(-(k-1)\beta-i_1-\dots-i_{k-1})!(-k\beta-n)!(-\beta-p+n)!} \\
&=\sum_{p=0}^{\infty}I_{0+}^{(k+2)\beta+p}b(t)\sum_{n=0}^{\infty}\sum_{i_1,\dots,i_{k-1}}\sum_{r_1,\dots,r_k} a^{(r_1)}(t)a^{(i_1+r_2)}(t)\dots a^{(i_{k-1}+r_k)}(t)a^{(p-I-R)}(t) \\ &\hspace{1cm}\times\frac{1}{i_1!\dots i_{k-1}!(n-I)!r_1!\dots r_k!(p-n-R)!} \\&\hspace{1cm}\times\frac{(-\beta)!(-\beta)!(-2\beta-i_1)!(-3\beta-i_1-i_2)!\dots(-k\beta-i_1-\dots-i_{k-1})!}{(-\beta-i_1)!(-2\beta-i_1-i_2)!\dots(-(k-1)\beta-i_1-\dots-i_{k-1})!(-k\beta-n)!(-\beta-p+n)!},
\end{align*}
where we have written $I=i_1+\dots+i_{k-1}$ and $R=r_1+\dots+r_k$ for simplicity. Now we can make the $n$-summation the innermost one and use Lemma \ref{Lem:hypergeom} to say that
\begin{align*}
&{\color{white}=}\sum_{n=0}^{\infty}\frac{1}{(n-I)!(p-n-R)!(-k\beta-n)!(-\beta-p+n)!} \\
&=\sum_{n'=0}^{\infty}\frac{1}{n'!(p-n'-I-R)!(-k\beta-n'-I)!(-\beta-p+n'+I)!} \\
&=\frac{(-(k+1)\beta-I-R)!}{(p-I-R)!(-k\beta-I)!(-\beta-R)!(-(k+1)\beta-p)!}.
\end{align*}
Re-labelling $p$ as $n$, and comparing the expression we have reached so far with the desired final result, we see that it remains to prove the following for all $n\geq0$:
\begin{multline*}
\sum_{i_1,\dots,i_{k-1}}\sum_{r_1,\dots,r_k} \frac{(-\beta)!(-\beta)!(-2\beta-i_1)!(-3\beta-i_1-i_2)!\dots(-(k-1)\beta-i_1-\dots-i_{k-2})!}{(-\beta-i_1)!(-2\beta-i_1-i_2)!\dots(-(k-1)\beta-i_1-\dots-i_{k-1})!} \\ \times\frac{(-(k+1)\beta-I-R)!}{(-\beta-R)!(-(k+1)\beta-n)!} \\ \times\frac{1}{i_1!\dots i_{k-1}!r_1!\dots r_k!(n-I-R)!} a^{(r_1)}(t)a^{(i_1+r_2)}(t)\dots a^{(i_{k-1}+r_k)}(t)a^{(n-I-R)}(t)\\
=\sum_{j_1+j_2+\dots+j_{k+1}=n}\frac{(-\beta)!(-2\beta-j_1)!(-3\beta-j_1-j_2)!\dots(-(k+1)\beta-j_1-\dots-j_k)!}{(-\beta-j_1)!(-2\beta-j_1-j_2)!\dots(-k\beta-j_1-\dots-j_k)!(-(k+1)\beta-n)!} \\\times\frac{1}{j_1!\dots j_{k+1}!}a^{(j_1)}(t)\dots a^{(j_{k+1})}(t).
\end{multline*}
Writing $j_1=r_1$, $j_2=i_1+r_2$, \dots, $j_k=i_{k-1}+r_k$, this becomes
\begin{multline*}
\sum_{j_1,\dots,j_k}\sum_{i_1\leq j_2,\dots,i_{k-1}\leq j_k}\frac{(-\beta)!(-\beta)!(-2\beta-i_1)!(-3\beta-i_1-i_2)!\dots(-(k-1)\beta-i_1-\dots-i_{k-2})!}{(-\beta-i_1)!(-2\beta-i_1-i_2)!\dots(-(k-1)\beta-i_1-\dots-i_{k-1})!} \\ \times\frac{(-(k+1)\beta-j_1-\dots-j_k)!}{(-\beta-j_1-\dots-j_k+i_1+\dots+i_{k-1})!(-(k+1)\beta-n)!} \\ \times\frac{1}{i_1!\dots i_{k-1}!j_1!(j_2-i_1)!\dots (j_k-i_{k-1})!(n-j_1-\dots-j_k)!} a^{(j_1)}(t)\dots a^{(j_k)}(t)a^{(n-j_1-\dots-j_k)}(t)\\
=\sum_{j_1,\dots,j_k}\frac{(-\beta)!(-2\beta-j_1)!(-3\beta-j_1-j_2)!\dots(-(k+1)\beta-j_1-\dots-j_k)!}{(-\beta-j_1)!(-2\beta-j_1-j_2)!\dots(-k\beta-j_1-\dots-j_k)!(-(k+1)\beta-n)!} \\\times\frac{1}{j_1!\dots j_k!(n-j_1-\dots-j_k)!}a^{(j_1)}(t)\dots a^{(n-j_1-\dots-j_k)}(t).
\end{multline*}
Therefore, it remains to prove the following for all $j_1,\dots,j_k\geq0$:
\begin{multline}
\label{toprove:j}
\sum_{i_1,\dots,i_{k-1}}\frac{(-\beta)!(-2\beta-i_1)!(-3\beta-i_1-i_2)!\dots(-(k-1)\beta-i_1-\dots-i_{k-2})!}{(-\beta-i_1)!(-2\beta-i_1-i_2)!\dots(-(k-1)\beta-i_1-\dots-i_{k-1})!} \\ \times\frac{1}{(-\beta-j_1-\dots-j_k+i_1+\dots+i_{k-1})!i_1!\dots i_{k-1}!(j_2-i_1)!\dots (j_k-i_{k-1})!} \\
=\frac{(-2\beta-j_1)!(-3\beta-j_1-j_2)!\dots(-k\beta-j_1-\dots-j_{k-1})!}{(-\beta-j_1)!(-2\beta-j_1-j_2)!\dots(-k\beta-j_1-\dots-j_k)!j_2!\dots j_k!},
\end{multline}
which is an identity between finite series, elementary albeit complicated to prove.

In the series on the left-hand side of \eqref{toprove:j}, there are four denominator terms and no numerator terms involving $i_{k-1}$. By Lemma \ref{Lem:hypergeom}, we have
\begin{multline*}
\sum_{i_{k-1}=0}^{\infty}\frac{1}{(-(k-1)\beta-i_1-\dots-i_{k-1})!(-\beta-j_1-\dots-j_k+i_1+\dots+i_{k-1})!i_{k-1}!(j_k-i_{k-1})!} \\ =\frac{(-k\beta-j_1-\dots-j_{k-1})!}{(-(k-1)\beta-i_1-\dots-i_{k-2})!j_k!(-k\beta-j_1-\dots-j_k)!(-\beta-j_1-\dots-j_{k-1}+i_1+\dots+i_{k-2})!},
\end{multline*}
so \eqref{toprove:j} is equivalent to
\begin{multline*}
\sum_{i_1,\dots,i_{k-1}}\frac{(-\beta)!(-2\beta-i_1)!(-3\beta-i_1-i_2)!\dots(-(k-2)\beta-i_1-\dots-i_{k-3})!}{(-\beta-i_1)!(-2\beta-i_1-i_2)!\dots(-(k-2)\beta-i_1-\dots-i_{k-2})!} \\ \times\frac{1}{(-\beta-j_1-\dots-j_{k-1}+i_1+\dots+i_{k-2})!i_1!\dots i_{k-2}!(j_2-i_1)!\dots (j_{k-1}-i_{k-2})!} \\
=\frac{(-2\beta-j_1)!(-3\beta-j_1-j_2)!\dots(-(k-1)\beta-j_1-\dots-j_{k-2})!}{(-\beta-j_1)!(-2\beta-j_1-j_2)!\dots(-(k-1)\beta-j_1-\dots-j_{k-1})!j_2!\dots j_{k-1}!},
\end{multline*}
which is itself identical to \eqref{toprove:j} with $k$ replaced by $k-1$. Therefore, by finite descent, it will be sufficient to prove \eqref{toprove:j} in the basic case $k=2$. In this case, the equation \eqref{toprove:j} is
\[
\sum_{i}\frac{(-\beta)!}{(-\beta-i)!}\cdot\frac{1}{(-\beta-j_1-j_2+i)!i!(j_2-i)!}=\frac{(-2\beta-j_1)!}{(-\beta-j_1)!(-2\beta-j_1-j_2)!j_2!},
\]
which follows from Lemma \ref{Lem:hypergeom}. Now we have completed the induction process, and the proof is complete.
\end{proof}

We now establish our main result of the paper, which is a direct consequence of Theorem \ref{Thm:Sk}.

\begin{theorem}\label{thmnewR}
Let $a(t)$ be analytic and $b(t)$ be continuous on $[0,T]$, and let $\beta\in\mathbb{C}$ with $\mathrm{Re}(\beta)>0$ and $m=\lfloor\mathrm{Re}(\beta)\rfloor+1$. Then the fractional differential equation \eqref{eq1here} under the initial conditions \eqref{eq4here} has a unique solution $y\in C^{m-1,\beta}[0,T]$ and it is given by the following convergent infinite series:
\begin{multline*}
y(t)=\sum_{k=0}^{\infty}\sum_{n=0}^{\infty}(-1)^k\binom{-\beta}{n}I_{0+}^{(k+1)\beta+n}b(t) \\ \times\Bigg[\sum_{i_1+i_2+\dots+i_k=n}\frac{(-\beta-n)!(-2\beta-i_1)!(-3\beta-i_1-i_2)!\dots(-k\beta-i_1-\dots-i_{k-1})!}{(-\beta-i_1)!(-2\beta-i_1-i_2)!\dots(-(k-1)\beta-i_1-\dots-i_{k-1})!(-k\beta-n)!}\Bigg. \\
\times \Bigg.\frac{n!}{i_1!\dots i_k!}a^{(i_1)}(t)\dots a^{(i_k)}(t)\Bigg].
\end{multline*}
\end{theorem}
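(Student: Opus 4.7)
The plan is to derive Theorem \ref{thmnewR} as an essentially immediate consequence of combining Theorem \ref{thm3.1} with Theorem \ref{Thm:Sk}. First, I would specialize Theorem \ref{thm3.1} to the single-coefficient problem \eqref{eq1here}--\eqref{eq4here} by taking $m = 1$, $\beta_0 = \beta$, $\beta_1 = 0$, and $a_1(t) = a(t)$. The hypotheses of Theorem \ref{thm3.1} are satisfied since $a, b \in C[0,T]$ (analyticity of $a$ implies continuity), $\mathrm{Re}(\beta_0) > 0 = \mathrm{Re}(\beta_1)$, and $\mathrm{Im}(\beta_1) = 0$. This yields the existence and uniqueness of $y \in C^{m-1,\beta}[0,T]$ together with the representation
\[y(t) = \sum_{k=0}^{\infty} (-1)^k I_{0+}^\beta \Big(a(t)\,I_{0+}^\beta\Big)^k b(t) = \sum_{k=0}^{\infty} (-1)^k S_k(t),\]
which is exactly the starting point \eqref{yformula} of Section \ref{mainresults}.

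The key step is then to substitute the explicit expansion of $S_k(t)$ furnished by Theorem \ref{Thm:Sk} into this series. For each $k \geq 1$, the formula \eqref{newrep} inserts directly. The $k = 0$ term requires only a brief check: with the conventions that empty products equal $1$ and that the empty-tuple sum $\sum_{i_1 + \dots + i_k = n}$ with $k=0$ contributes $1$ only for $n = 0$, the $k = 0$ contribution in the claimed formula collapses to $\binom{-\beta}{0} I_{0+}^{\beta} b(t) = I_{0+}^\beta b(t) = S_0(t)$, matching the starting formula.

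The main technical point to handle carefully is the sense in which the resulting double series converges. Reading the expression in the statement as an iterated series (first summing over $n$ together with the finite inner sum over $(i_1, \dots, i_k)$ for each fixed $k$, then summing over $k$), convergence is automatic from the two prior results: by Theorem \ref{Thm:Sk}, the inner series converges to $S_k(t)$ for each $k \geq 0$, and by Theorem \ref{thm3.1}, the outer alternating series converges to $y(t)$ in the norm of $C^{m-1,\beta}[0,T]$. Thus no additional analysis is required to prove the theorem as stated. Should one wish to upgrade to absolute convergence of the full double series, this would be the main obstacle and would require quantitative estimates: combining Cauchy's estimates $|a^{(j)}(t)| \leq M j!/R^j$ on $[0,T]$ (available since $a$ is analytic on a neighbourhood of $[0,T]$), together with the bound $\|I_{0+}^{(k+1)\beta+n} b\|_\infty \leq \|b\|_\infty T^{\mathrm{Re}((k+1)\beta)+n}/\Gamma((k+1)\mathrm{Re}(\beta)+n+1)$ and Stirling-type control on the gamma function ratios in the combinatorial coefficients. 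However, this refinement is not needed for the theorem in its present form.
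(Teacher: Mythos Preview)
Your proposal is correct and matches the paper's approach exactly: the paper states that Theorem~\ref{thmnewR} is ``a direct consequence of Theorem~\ref{Thm:Sk}'' and gives no further argument, so your write-up simply fills in the obvious details (specialising Theorem~\ref{thm3.1}, substituting \eqref{newrep} for each $S_k$, and noting that convergence of the iterated series is inherited from the two prior theorems). Your handling of the $k=0$ term and your remark that absolute summability of the double series is not actually required are both fine elaborations beyond what the paper writes.
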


\subsection{The constant-coefficient case} \label{Subsec:CC}

In order to verify the consistency of our new results with some existing results in the literature, let us take the special case of \eqref{eq1here} where the variable coefficient is a constant, i.e. $a(t)=\lambda\in\mathbb{R}$. Thus, we consider the initial value problem
\begin{equation}\label{eqhereC}
\begin{cases}
^{C}D_{0+}^{\beta}y(t)+\lambda y(t)=b(t),\qquad t\in[0,T], \\
\left(\frac{\mathrm{d}}{\mathrm{d}t}\right)^k y(t)\Big|_{t=+0}=0,\qquad k=0,1,\ldots,\lfloor\beta\rfloor,
\end{cases}
\end{equation}
where $\mathrm{Re}(\beta)>0$, $\lambda\in\mathbb{R}$, and $b\in C[0,T]$.

The consequence of Theorem \ref{thmnewR} in this case is that the fractional differential equation \eqref{eqhereC} has a unique solution $y\in C^{\lfloor\beta\rfloor,\beta}[0,T]$ given by the following formula:
\begin{equation}
\label{CCsoln}
y(t)=\int_0^t (t-s)^{\beta-1}E_{\beta,\beta}(-\lambda(t-s)^{\beta})b(s)\,\mathrm{d}s,
\end{equation}
where $E_{\alpha,\beta}(z)=\sum_{k=0}^{\infty}\frac{z^k}{\Gamma(\alpha k+\beta)}$ is the two-parameter Mittag-Leffler function \cite{mittagbook} for any complex parameters $\alpha,\beta$ with $\mathrm{Re}\beta>0$.

To see why, notice that, setting $a(t)=\lambda$ in Theorem \ref{thmnewR}, we have directly
\[
y(t)=\sum_{k=0}^{\infty}(-\lambda)^kI_{0+}^{(k+1)\beta}b(t),
\]
since the second series given in Theorem \ref{thmnewR} becomes zero for $n\neq 0$, while for $n=0$ we get just $\lambda^k$. Now the desired representation follows straightforwardly using the definition of the Mittag-Leffler function.

The expression \eqref{CCsoln} for the solution to the constant-coefficient initial value problem \eqref{eqhereC}, in terms of the Mittag-Leffler function, is consistent with the results of \cite[Theorem 7.2 and Remark 7.1]{diethelm} for the solution of the same differential equation, found using the variation of constants method. Therefore, we have verified the consistency of our results with those already known in the literature.

\section{Applications to partial differential equations}\label{further}

The results of the previous section concern ordinary differential equations of fractional type, but they can also be applied to solve certain types of partial differential equations using both time-fractional and space-fractional operators. In this section we give an analytical solution of a Cauchy type problem for a fractional partial differential equation with time-dependent coefficient. Normally, the solutions of such problems are found by applying numerical tools or different methods to approximate the solution. For instance, the general problems treated in \cite{cauchy3,cauchy1,cauchy2} and references therein can be compared with our Cauchy type problems treated below. Finding explicitly the solution of a fractional partial differential equation with a time-dependent variable coefficient, as we shall do in this section, will be potentially advantageous in the understanding of these problems.

First, we recall the Fourier transform, the inverse Fourier transform, and the fractional Laplacian, which will be used below.

\begin{definition}
The Fourier transform $f$ of a function $\phi:\mathbb{R}^n\to\mathbb{C}$ is defined by
\[f(y)=(\mathcal{F\phi})(y)=\widehat{\phi}(y)=\int_{\mathbb{R}^n}e^{iy\cdot x}\phi(x)\,\mathrm{d}x.\]
Conversely, the inverse Fourier transform is defined by 
\[\phi(y)=\big(\mathcal{F}^{-1}f\big)(y)=\frac{1}{(2\pi)^n}\int_{\mathbb{R}^n}e^{-iy\cdot\tau}f(\tau)\,\mathrm{d}\tau,\] 
where $``\cdot"$ is the inner product of vectors in $\mathbb{R}^n$.  
\end{definition}

\begin{definition}
The fractional Laplacian $(-\Delta)^{\lambda}$ is defined \cite[Chapter 5]{samko} as a pseudo-differential operator with the symbol $|y|^{2\lambda}$, namely by:
\begin{equation}\label{fractionallaplacian}
(\mathcal{F}(-\Delta)^{\lambda}f)(y)=|y|^{2\lambda}(\mathcal{F}f)(y),\qquad y\in\mathbb{R}^n.
\end{equation}
It can be also defined by the following hypersingular integral: 
\[(-\Delta)^{\lambda}f(y)=\frac{1}{d_{n,m}(\lambda)}\int_{\mathbb{R}^n}\frac{(\Delta_u^m f)(y)}{|u|^{n+2\lambda}}du,\]
whenever $0<\lambda<m$, $m\in\mathbb{N}$, where $(\Delta_u^m f)(y)$ is the difference operator given in \cite[Formulas 25.57 and 25.58]{samko}, and $d_{n,m}(\lambda)$ is a normalisation constant. Notice that for $\lambda=1$ we recover the classical Laplacian in $\mathbb{R}^n$, i.e. $\Delta_x=\sum_{k=1}^{n}\partial_{x_k}^{2}$.
\end{definition}


\begin{theorem} \label{Thm:PDE}
Let $T>0$, $0<\alpha\leq 1$, $\beta>0$, $m-1\leq\beta<m$, $r(x,\cdot)\in C[0,T]$, and let $\Psi(t)\in C^{\infty}[0,T]$ be analytic. We consider the following Cauchy problem for a fractional partial differential equation: 
\begin{equation}\label{eq1Cauchynew}
\begin{cases}
^{C}\partial_{t}^{\beta}h(x,t)+\Psi(t)(-\Delta)^{\alpha}h(x,t)&=r(x,t),\quad t\in (0,T],\;x\in\mathbb{R}^n, \\
\hfill h(x,t)\big|_{t=0}&=0, \\
\hfill\partial_t h(x,t)\big|_{t=0}&=0, \\
&\hspace{0.2cm}\vdots \\
\hfill\partial_t^{m} h(x,t)\big|_{t=0}&=0,
\end{cases}
\end{equation}
where $(-\Delta)^{\alpha}$ is the fractional Laplacian with respect to $x\in\mathbb{R}^n$ and $^{C}\partial_{t}^{\beta}$ is the Caputo fractional derivative (defined according to \eqref{alternative} with $a=0$) with respect to $t\in(0,T)$.

The solution of the fractional Cauchy type problem \eqref{eq1Cauchynew} is given explicitly by
\begin{multline*}
h(x,t)=\sum_{k=0}^{\infty}\sum_{n=0}^{\infty}\binom{-\beta}{n}I_{t}^{(k+1)\beta+n}\mathcal{F}_s^{-1}\big((-|s|^{2\alpha})^k\widehat{r}(s,t)\big)(x,t) \\ \times\Bigg[\sum_{i_1+i_2+\dots+i_k=n}\frac{(-\beta-n)!(-2\beta-i_1)!(-3\beta-i_1-i_2)!\dots(-k\beta-i_1-\dots-i_{k-1})!}{(-\beta-i_1)!(-2\beta-i_1-i_2)!\dots(-(k-1)\beta-i_1-\dots-i_{k-1})!(-k\beta-n)!}\Bigg. \\
\times \Bigg.\frac{n!}{i_1!\dots i_k!}\Psi^{(i_1)}(t)\dots\Psi^{(i_k)}(t)\Bigg].
\end{multline*}
\end{theorem}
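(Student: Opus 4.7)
The plan is to reduce the spatial fractional PDE to a time-fractional ODE by taking the Fourier transform in $x$, then apply Theorem \ref{thmnewR} to the resulting one-dimensional problem, and finally invert the Fourier transform.

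First I would take the Fourier transform $\mathcal{F}_x$ (acting in the spatial variable only) of both sides of the PDE in \eqref{eq1Cauchynew}. Using the symbol relation \eqref{fractionallaplacian}, the fractional Laplacian becomes pointwise multiplication by $|y|^{2\alpha}$. The Caputo derivative commutes with $\mathcal{F}_x$ since it acts only in $t$. Writing $\widehat{h}(y,t) = (\mathcal{F}_x h)(y,t)$ and $\widehat{r}(y,t) = (\mathcal{F}_x r)(y,t)$, and noting that the Fourier transform of each initial condition $\partial_t^k h(x,0) = 0$ is again zero, we obtain for each fixed frequency $y$ the one-dimensional fractional Cauchy problem
\begin{equation*}
{}^C D_{0+}^{\beta}\widehat{h}(y,t) + \bigl(\Psi(t)|y|^{2\alpha}\bigr)\widehat{h}(y,t) = \widehat{r}(y,t),\qquad \partial_t^k\widehat{h}(y,t)\big|_{t=0}=0,\ k=0,\dots,m-1.
\end{equation*}

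Next, I would apply Theorem \ref{thmnewR} directly to this problem, with the time-dependent coefficient $a(t) = \Psi(t)|y|^{2\alpha}$ and forcing term $b(t) = \widehat{r}(y,t)$. The analyticity hypothesis on $a$ is satisfied because $\Psi\in C^\infty[0,T]$ is analytic in $t$ and $|y|^{2\alpha}$ is a constant with respect to $t$. Consequently $a^{(i_j)}(t) = \Psi^{(i_j)}(t)\,|y|^{2\alpha}$, so the product inside the bracket of Theorem \ref{thmnewR} factors as
\begin{equation*}
a^{(i_1)}(t)\cdots a^{(i_k)}(t) = (|y|^{2\alpha})^k\,\Psi^{(i_1)}(t)\cdots\Psi^{(i_k)}(t),
\end{equation*}
and the overall factor $(-1)^k$ combines with $(|y|^{2\alpha})^k$ to give $(-|y|^{2\alpha})^k$. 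Pulling this constant (in $t$) inside the Riemann--Liouville integral $I_{0+,t}^{(k+1)\beta+n}$, I get an explicit representation of $\widehat{h}(y,t)$.

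Finally, I would invert the Fourier transform term by term, renaming the dual variable $s$, to recover $h(x,t) = \mathcal{F}_s^{-1}\widehat{h}(s,t)$. The inverse transform commutes with the time-fractional integral $I_{0+,t}^{(k+1)\beta+n}$ and with the summations over $k,n$, and the bracketed combinatorial coefficient is independent of $s$, so it survives the inversion unchanged. This produces precisely the claimed formula, with the spatial action packaged as $\mathcal{F}_s^{-1}\bigl((-|s|^{2\alpha})^k\widehat{r}(s,t)\bigr)$. The main obstacle I anticipate is justifying the interchanges (Fubini-type arguments for the summation with the fractional integral and with the inverse Fourier transform, and convergence of the series in a suitable function space containing $h(x,\cdot)$); this requires sufficient decay of $\widehat{r}(\cdot,t)$ in $s$ and control of $|s|^{2\alpha k}$, but under the regularity assumed on $r$ and $\Psi$ it proceeds by the same type of estimate used to prove convergence of the series in Theorem \ref{thmnewR}.
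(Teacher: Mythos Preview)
Your proposal is correct and follows essentially the same route as the paper: Fourier transform in space to reduce to the ODE \eqref{eq1here}--\eqref{eq4here} with $a(t)=|s|^{2\alpha}\Psi(t)$, apply Theorem \ref{thmnewR}, factor out $(-|s|^{2\alpha})^k$, and invert the Fourier transform. The paper's proof is in fact slightly terser than yours, as it does not comment on the interchange of sums, integrals, and inverse transform.
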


\begin{proof}
We begin by applying the space Fourier transform to the original problem \eqref{eq1Cauchynew}, which transforms it into the following Cauchy problem in the $(s,t)$ domain:
\begin{equation}\label{fourierpronew}
\begin{cases}
^{C}\partial_{t}^{\beta}\widehat{h}(s,t)+|s|^{2\alpha}\Psi(t)\widehat{h}(s,t)&=\widehat{r}(s,t),\quad t\in(0,T],\; s\in\mathbb{R}^n,\\
\hfill\widehat{h}(s,t)\big|_{t=0}&=0, \\
\hfill\partial_t \widehat{h}(s,t)\big|_{t=0}&=0,\\
&\hspace{0.2cm}\vdots \\
\hfill\partial_t^{m} \widehat{h}(s,t)\big|_{t=0}&=0.
\end{cases}
\end{equation}
Here in \eqref{fourierpronew}, we have essentially a fractional ordinary differential equation with variable coefficient $|s|^{2\alpha}\Psi(t)$. Hence, by using Theorem \ref{thmnewR} with $a(t)=|s|^{2\alpha}\Psi(t)$, we have that the system \eqref{fourierpronew} has a unique solution in the space $C^{\beta,m-1}[0,T]$ given by: 
\begin{multline*}
\widehat{h}(s,t)=\sum_{k=0}^{\infty}\sum_{n=0}^{\infty}(-|s|^{2\alpha})^k\binom{-\beta}{n}I_{0+}^{(k+1)\beta+n}\widehat{r}(s,t) \\ \times\Bigg[\sum_{i_1+i_2+\dots+i_k=n}\frac{(-\beta-n)!(-2\beta-i_1)!(-3\beta-i_1-i_2)!\dots(-k\beta-i_1-\dots-i_{k-1})!}{(-\beta-i_1)!(-2\beta-i_1-i_2)!\dots(-(k-1)\beta-i_1-\dots-i_{k-1})!(-k\beta-n)!}\Bigg. \\
\times \Bigg.\frac{n!}{i_1!\dots i_k!}\Psi^{(i_1)}(t)\dots\Psi^{(i_k)}(t)\Bigg].
\end{multline*}
We then obtain the desired representation of the solution by applying the inverse Fourier transform to the above formula for $\widehat{h}(s,t)$.
\end{proof}

As a corollary of Theorem \ref{Thm:PDE}, considering the case $\Psi(t)=\lambda\in\mathbb{R}$ in the equation \eqref{eq1Cauchynew}, we get the following fractional Cauchy problem with a constant coefficient:
\begin{equation}\label{eq1CauchynewC}
\begin{cases}
^{C}\partial_{t}^{\beta}h(x,t)+\lambda(-\Delta)^{\alpha} h(x,t)&=r(x,t),\quad t\in (0,T]$,\;$x\in\mathbb{R}^n, \\
\hfill h(x,t)\big|_{t=0}&=0, \\
\hfill\partial_t h(x,t)\big|_{t=0}&=0, \\
&\hspace{0.2cm}\vdots \\
\hfill\partial_t^{m} h(x,t)\big|_{t=0}&=0,
\end{cases}
\end{equation}
where $0<\alpha\leq 1$, $\beta>0$, $m-1<\beta\leq m,$ $r(x,\cdot)\in C[0,T]$, and $\Psi(t)\in C[0,T]$. The solution is given by 
\[h(x,t)=\int_0^t (t-s)^{\beta-1}\mathcal{F}_s^{-1}\biggl\{E_{\beta,\beta}(-\lambda|s|^{2\alpha}(t-s)^{\beta})\widehat{r}(s,t)\biggr\}\,\mathrm{d}s,\]
where the Mittag-Leffler function emerges in the same way as in Section \ref{Subsec:CC} above, and where it is assumed that $r(\cdot,t)\in L^{1}(\mathbb{R}^n)$ and $E_{\beta,\beta}(-\lambda|\cdot|^{2\alpha}(t-s)^{\beta})\widehat{r}(\cdot,t)\in L^{1}(\mathbb{R}^n)$.

\section{Conclusions and future work} \label{Sec:concl}

In this paper, we have established new series representations for the solutions to fractional differential equations with variable time-dependent coefficients. These exact solutions had already been constructed in the previous work of \cite{analitical}, and their uniqueness in appropriate function spaces had been verified in \cite{RRS}, but the solutions constructed in the literature so far have been of a form which is very difficult to compute in practice, involving arbitrarily many nested fractional integrals with function multipliers in between. Our work here showcases a new formula which involves a single fractional integral in each summand, with a coefficient which depends only on the classical derivatives of the variable coefficient function. This is expected to be very useful in numerical calculations of solution functions for such equations.

To illustrate the impact of our obtained results, we have applied them, not only for ordinary differential equations with Caputo fractional derivatives and variable coefficients, but also for partial differential equations with both time-fractional and space-fractional derivative operators and with time-dependent coefficients. We have also verified that, in the case of constant coefficients, our results are consistent with those already known in the literature \cite{diethelm} for this simple case.

The work in this paper has been under the assumption of homogeneous initial conditions. If we want to consider initial conditions different from zero, we can easily extend the results here, in a similar way as it was done in \cite{analitical,RRS}, to obtain a new version of Theorem \ref{thmnewR} giving the solution of the problem \eqref{eq1here} under general initial conditions, as well as a new version of Theorem \ref{Thm:PDE} giving the solution of the problem \eqref{eq1Cauchynew} under general initial conditions. We leave these trivial variations of our results to be proved by an interested reader.

Many other extensions of these results are also possible. The differential equations considered here have featured only one fractional time derivative and one time-dependent coefficient. A more general class of fractional differential equations was solved in the original work of \cite{analitical}, and we hope in the future to find a new series representation for that more general solution too, analogous to our work here. Furthermore, some other recent papers \cite{FRS,RRS} have focused on extensions of the work of \cite{analitical} to differential equations with other types of fractional operators, and these too can now be studied using the methods we have displayed in this paper. The results presented here have opened up many avenues for future research.

\section{Acknowledgements} The authors were supported by the Nazarbayev University Program 091019CRP2120. Joel E. Restrepo thanks to Colciencias and Universidad de Antioquia (Convocatoria 848 - Programa de estancias postdoctorales 2019) for their support.

\end{document}